\title{\textsc{\textbf{{Parabolic frequency on Ricci flows}}}}
\author{\textsc{Julius Baldauf\thanks{Supported in part by the National Science Foundation. {\it E-mail}: \texttt{juliusbl@mit.edu}} \qquad \quad \qquad Dain Kim}
\vspace{0.2cm}\\

    \textsc{\footnotesize MIT Department of Mathematics}\vspace{-0.1cm}\\
    \textsc{\footnotesize Cambridge, MA} \vspace{-0.05cm}
}
\date{}
\newcommand{\Addresses}{{
  \bigskip
  \bigskip
  \small
    \textsc{Department of Mathematics}\par\nopagebreak
    \textsc{Massachusetts Institute of Technology}\par\nopagebreak
    \textsc{77 Massachusetts Avenue} \par\nopagebreak
    \textsc{Cambridge, MA 02139} 
    
    \medskip
    \medskip
    
    \textit{Correspondence to be sent to:} \texttt{juliusbl@mit.edu}

}}
\renewcommand\th@plain{\slshape}
\xpatchcmd{\proof}{\itshape}{\slshape}{}{}
\renewcommand\th@plain{\slshape}
\titleformat*{\section}{\centering\large\scshape\sffamily}
\titleformat*{\subsection}{\centering\normalsize\scshape\sffamily}
\titleformat*{\subsubsection}{\centering\large\scshape\sffamily}
\numberwithin{equation}{section} 
\theoremstyle{plain} 
\newtheorem{lemma}[equation]{Lemma}
\newtheorem{proposition}[equation]{Proposition}
\newtheorem{theorem}[equation]{Theorem}
\newtheorem{corollary}[equation]{Corollary}
\theoremstyle{definition}
\newcommand{\R}{\mathbb{R}}
\newcommand{\Z}{\mathbb{Z}}
\newcommand{\N}{\mathbb{N}}
\newcommand{\Div}{\mathrm{div}}
\newcommand{\be}{\begin{equation}}
\newcommand{\ee}{\end{equation}}
\newcommand{\Ric}{\mathrm{Ric}}
\newcommand{\Scal}{R}
\renewcommand{\phi}{\varphi}
\renewcommand{\epsilon}{\varepsilon}
\newcommand{\Spec}{\mathrm{Spec}}
\newcommand{\Hess}{\mathrm{Hess}}
\newcommand{\euc}{\mathrm{euc}}
\newcommand{\dL}{\mathcal{L}}
\begin{document}

\maketitle

\begin{abstract}
This paper defines a parabolic frequency for solutions of the heat equation on a Ricci flow and proves its monotonicity along the flow. Frequency monotonicity is known to have many useful consequences; here it is shown to provide a simple proof of backwards uniqueness. For solutions of more general parabolic equations on a Ricci flow, this paper provides bounds on the derivative of the frequency, which similarly imply backwards uniqueness.
\end{abstract}

\section{Introduction}

Growth bounds for solutions of partial differential equations provide vital information and have many useful consequences. For a harmonic function $u$ on $\R^n$, Almgren \cite{A} first proved the monotonicity of the frequency function
\begin{equation}
    N(r)
    =\frac{r\|\nabla u\|_{L^2(B_r(p))}^2}{\|u\|_{L^2(\partial B_r(p))}^2}.
\end{equation}
This function measures the rate of growth of $u$ near the point $p$. The name \emph{frequency} stems from the fact that a harmonic function which is homogeneous of degree $k$ has $N(r)\equiv k$. 

Since its introduction by Almgren, the elliptic frequency has been considerably generalized.
Garofalo-Lin \cite{GL1} extended it to harmonic functions on Riemannian manifolds, as well as to solutions of more general elliptic equations.
For a harmonic function on a Riemannian manifold, there exist constants $\Lambda,R>0$ such that $e^{\Lambda r^2}N(r)$ is monotone increasing for $r\in (0,R)$ \cite{GL1, M}.

Monotonicity of the frequency has been applied fruitfully in various contexts; for example, to study the nodal and critical sets of solutions to elliptic and parabolic equations and to prove unique continuation \cite{GL1, GL2, HL, HHL, Li, E, EFV, Lo}. 
Additionally, Colding-Minicozzi used frequency monotonicity to prove finite dimensionality of the space of polynomial growth harmonic functions on manifolds with nonnegative Ricci curvature and Euclidean volume growth \cite{CM1}. 
More recently, Taubes defined and studied versions of the elliptic frequency in the context of gauge theory \cite{T}. 

The parabolic frequency generalizes Almgren's original elliptic frequency for harmonic functions. On static manifolds, the parabolic frequency was first defined by Poon \cite{P}. 
Poon's frequency monotonicity theorem holds for manifolds with non-negative sectional curvature and parallel Ricci curvature, which are the assumptions necessary for Hamilton's matrix Harnack inequality to hold \cite{H1,H2}. 
Using the drift Laplacian, \cite{CM3} circumvented using Hamilton's matrix Harnack inequality and proved monotonicity of the parabolic frequency for arbitrary static manifolds. 

This paper is the first to study notions of parabolic frequency on general evolving manifolds, having previously been investigated only in two special contexts: on 2-dimensional Ricci flows by Li-Wang \cite{LW}, and on shrinking Ricci solitons of arbitrary dimension by Colding-Minicozzi \cite{CM3}. This paper's definition of frequency (\ref{eqn: frequency defn}) generalizes Colding-Minicozzi's to arbitrary Ricci flows. 

Let $(M^n,g(t))_{t \in I}$ be a Ricci flow, let $\tau$ denote backwards time, and let $K=(4\pi \tau)^{-\frac{n}{2}}e^{-f}$
be the conjugate heat kernel centered at some space-time point (see Section \ref{sec: monotonicity for heat operator} for precise definitions). 
Assume that along the flow there holds a Bakry-\'Emery bound $\Ric_f:=\Ric+\Hess_f\leq \frac{\kappa}{2\tau}g$, for some time-dependent constant $\kappa=\kappa(t)$. Such a bound always exists when $M^n$ is compact, for example.
This paper's definition of the parabolic frequency employs the conjugate heat kernel measure $d\nu=K\,dV$, as well as the associated drift Laplacian $\dL_f$, which is the operator defined by
\begin{equation}
    \dL_{f}u=\Delta u-\langle \nabla f,\nabla u\rangle = e^{f}\,\Div\,(e^{-f}\nabla u).
\end{equation}

The \emph{parabolic frequency} $U(t)$ of a solution $u$ of the heat equation along the Ricci flow, with\footnote{Some growth assumption is necessary to rule out the classical Tychonoff example \cite[Chapter 7]{J}.} 
$u,\partial_tu \in W^{2,2}(d\nu)$ for each time $t\in I$, 
is defined by
\begin{equation} \label{eqn: frequency defn}
    U(t)
    =-\frac{\tau\|\nabla u\|_{L^2(d\nu)}^2}{\|u\|_{L^2(d\nu)}^2}e^{-\int \frac{1-\kappa}{\tau}}
    =\frac{\tau\langle \dL_f u,u\rangle_{L^2(d\nu)}}{\|u\|_{L^2(d\nu)}^2}e^{-\int \frac{1-\kappa}{\tau}}.
\end{equation}
The exponential involving $\kappa$ is a correction term depending on the geometry of the flow; 
it is the parabolic analogue of the error term  $e^{\Lambda r^2}$ appearing in the above elliptic frequency.
By further analogy with the elliptic case, a caloric polynomial of degree $k$ has $U(t)\equiv -\frac{k}{2}$; see Appendix \ref{sec: examples}.

\begin{theorem}
[Frequency monotonicity]\label{thm: monotonicity of frequency on RFs}
The frequency $U$ is monotone increasing along the Ricci flow, and $U'(t)=0$ only if $u$ is an eigenfunction of $\dL_f$ satisfying $\dL_fu=c(t)u$, where $c(t)=e^{-\int \frac{1-\kappa}{\tau}}\frac{U}{\tau}$.
\end{theorem}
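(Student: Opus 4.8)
The plan is to differentiate $U(t)$ directly and show the derivative is non-negative, following the standard frequency computation but carrying the Ricci-flow and drift terms carefully. Write $I(t)=\|u\|_{L^2(d\nu)}^2$ and $D(t)=\|\nabla u\|_{L^2(d\nu)}^2 = -\langle \dL_f u,u\rangle_{L^2(d\nu)}$, so that $U=-\tau D/I \cdot e^{-\int\frac{1-\kappa}{\tau}}$. First I would record the evolution equations for the measure and the relevant quantities: since $d\nu=K\,dV$ with $K$ the conjugate heat kernel, $\partial_t(d\nu)=0$ in the appropriate sense, and the heat equation $\partial_t u=\Delta u$ together with $\partial_t g=-2\Ric$ gives $\partial_t|\nabla u|^2 = 2\langle\nabla\Delta u,\nabla u\rangle + 2\Ric(\nabla u,\nabla u)$. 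Using these, and $\partial_t\tau=-1$, I would derive clean formulas
\[
I'(t) = 2\langle \dL_f u,u\rangle_{L^2(d\nu)} = -2D,
\qquad
D'(t) = \frac{1}{\tau}D\cdot(\text{geometry}) - 2\|\dL_f u\|_{L^2(d\nu)}^2 + (\text{error controlled by }\Ric_f\le\tfrac{\kappa}{2\tau}g).
\]
The key structural input is Perelman-type pointwise identities for the weighted heat operator acting on $u$, $|\nabla u|^2$, and $\dL_f u$; integrating against $d\nu$ and using that $\dL_f$ is self-adjoint in $L^2(d\nu)$ (which is the whole point of introducing the drift Laplacian, exactly as in \cite{CM3}) converts the bad curvature terms into the Bakry-\'Emery expression $\Ric_f$, which the hypothesis bounds by $\frac{\kappa}{2\tau}g$.

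Next I would assemble the logarithmic derivative. A direct computation gives
\[
\frac{U'}{U} = \frac{D'}{D} - \frac{I'}{I} - \frac{1}{\tau} - \frac{1-\kappa}{\tau}\cdot(-1)\cdot(\text{from differentiating the exponential}),
\]
and the point is that the term $\frac{1-\kappa}{\tau}$ coming from $\partial_t e^{-\int\frac{1-\kappa}{\tau}}$ is designed precisely to cancel the geometric error term in $D'$ plus the explicit $-1/\tau$. After this cancellation one is left with
\[
U'(t) = 2\,\frac{\tau}{I}\,e^{-\int\frac{1-\kappa}{\tau}}\left(\|\dL_f u\|_{L^2(d\nu)}^2\,I - \langle\dL_f u,u\rangle_{L^2(d\nu)}^2\right),
\]
which is $\geq 0$ by the Cauchy--Schwarz inequality in $L^2(d\nu)$. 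Equality holds iff $\dL_f u$ and $u$ are pointwise proportional, i.e. $\dL_f u = c(t)u$ for a scalar $c(t)$; reading off $c(t)$ from $\langle\dL_f u,u\rangle = c\,I$ and the definition of $U$ gives $c(t)=e^{-\int\frac{1-\kappa}{\tau}}U/\tau$, as claimed.

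I expect the main obstacle to be the derivation of $D'(t)$: one must commute $\partial_t$ past $\nabla$ on an evolving manifold (producing the $\Ric(\nabla u,\nabla u)$ term), integrate by parts twice against the non-static weighted measure (the weight $f$ itself evolves), and track which terms are genuine error versus which combine into $\Ric_f$. The cleanest route is to use the Perelman conjugate-heat-kernel identities so that the $\Hess_f$ contribution appears naturally, and to verify that the $W^{2,2}(d\nu)$ hypothesis on $u$ and $\partial_t u$ justifies all integrations by parts and the absence of boundary-at-infinity terms. A secondary technical point is confirming $\partial_t\!\int K\,dV=0$ and more generally the product rule $\frac{d}{dt}\int h\,d\nu = \int(\partial_t h + \dL_f h - \tfrac{?}{}\,\cdots)\,d\nu$ form used implicitly above; this is where the conjugate heat equation $\partial_t K = -\Delta K + R K$ (equivalently $\Box^* K=0$) enters, and it must be stated precisely before the main computation.
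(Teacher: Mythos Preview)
Your outline is essentially the paper's own argument: compute $I'$ and $D'$ using the evolution of $d\nu$ under the conjugate heat equation, convert the Hessian term into $|\dL_f u|^2-\Ric_f(\nabla u,\nabla u)$ via the drift Bochner formula (the paper packages this as a separate lemma), apply the Bakry--\'Emery bound, and finish with Cauchy--Schwarz. Two points to correct. First, $\partial_t(d\nu)$ is \emph{not} zero; the paper uses $\partial_t(d\nu)=-\tfrac{\Delta K}{K}\,d\nu$, which after integration by parts yields the identity $\tfrac{d}{dt}\int h\,d\nu=\int(\partial_t-\Delta)h\,d\nu$ that you are implicitly invoking. Second, your displayed formula for $U'(t)$ should be an inequality $U'(t)\geq 2\tau I^{-1}e^{\,\cdots}\bigl(\|\dL_f u\|^2 I-\langle\dL_f u,u\rangle^2\bigr)$, not an equality: the exponential correction is calibrated so that the term it produces cancels the residual \emph{after} applying $\Ric_f\leq\tfrac{\kappa}{2\tau}g$, and that step loses information unless you are exactly on a shrinker. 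This matters for the rigidity statement, which is only an ``only if''; be careful also that the sign in the exponent matches the one that makes this cancellation work (the paper's proof uses $e^{\int(1-\kappa)/\tau}$).
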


Part of the strength of frequency monotonicity is the simplicity with which it can be applied. 

\begin{corollary}
[Backwards uniqueness]\label{cor: backwards uniqueness}
If $a<b$ and $u(\cdot,b)=0$, then $u\equiv 0$ for all $t\in [a,b]$.
\end{corollary}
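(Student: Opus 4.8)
The plan is to deduce backwards uniqueness directly from the monotonicity of $U$ (Theorem \ref{thm: monotonicity of frequency on RFs}), arguing by contradiction. Suppose $u$ is not identically zero on $[a,b]$ but $u(\cdot,b)=0$. Since $u(\cdot,b)=0$, continuity (in $L^2(d\nu)$) of $t\mapsto \|u\|_{L^2(d\nu)}^2$ forces this quantity to vanish at $t=b$, so there is a largest time $t_0\in[a,b)$ with $\|u(\cdot,t_0)\|_{L^2(d\nu)}^2>0$ — more precisely, let $t_0=\sup\{t\in[a,b]: \|u(\cdot,t)\|_{L^2(d\nu)}>0\}$, which satisfies $t_0<b$ would be false; rather I should take $t_0$ to be the infimum of times at which $u$ vanishes, or simply work on a subinterval where $u\not\equiv 0$. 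The cleaner route: pick any $t_1\in[a,b)$ with $u(\cdot,t_1)\neq 0$; then $U(t_1)$ is a finite real number. By monotonicity, $U$ is increasing, so $U(t)\geq U(t_1)$ for all $t\in(t_1,b)$ on which $u\neq 0$.

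The key mechanism is that the frequency controls the rate of decay of $\|u\|_{L^2(d\nu)}^2$. Indeed, from the definition (\ref{eqn: frequency defn}), one computes
\begin{equation}
    \frac{d}{dt}\log\|u\|_{L^2(d\nu)}^2 = \frac{2\langle \dL_f u,u\rangle_{L^2(d\nu)}}{\|u\|_{L^2(d\nu)}^2} + (\text{lower-order terms from } \partial_t\, d\nu) = \frac{2U(t)}{\tau}e^{\int\frac{1-\kappa}{\tau}} + (\cdots),
\end{equation}
where the remaining terms are bounded on any compact subinterval of $I$ (they come from differentiating the weight $K$ and the volume form, and are controlled by the hypotheses ensuring $U$ is well-defined, e.g. the Bakry-\'Emery bound). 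The precise computation of this logarithmic derivative is essentially the content of the proof of Theorem \ref{thm: monotonicity of frequency on RFs}, so I would extract it from there. The upshot is a differential inequality of the form $\frac{d}{dt}\log\|u\|_{L^2(d\nu)}^2 \geq -C$ on any compact subinterval avoiding the endpoint behavior, once we know $U$ is bounded below — and $U$ is bounded below on $(t_1,b)$ precisely by $U(t_1)>-\infty$ by monotonicity.

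Carrying this out: fix $t_1\in[a,b)$ with $u(\cdot,t_1)\neq0$, and let $t_* = \sup\{t\in[t_1,b]: \|u(\cdot,s)\|_{L^2(d\nu)}>0 \text{ for all } s\in[t_1,t]\}$. On $[t_1,t_*)$ the frequency is defined, increasing, hence bounded below by $U(t_1)$, so $\log\|u\|_{L^2(d\nu)}^2$ satisfies a lower bound $\frac{d}{dt}\log\|u\|_{L^2(d\nu)}^2\geq -C$ for a constant $C$ depending on $U(t_1)$, $\kappa$, and $\tau$ over the compact interval $[t_1,b]$. Integrating from $t_1$ to any $t<t_*$ gives $\|u(\cdot,t)\|_{L^2(d\nu)}^2 \geq \|u(\cdot,t_1)\|_{L^2(d\nu)}^2 e^{-C(t-t_1)} > 0$, which shows $\|u\|_{L^2(d\nu)}^2$ cannot approach zero on $[t_1,b]$; in particular $t_*=b$ and $\|u(\cdot,b)\|_{L^2(d\nu)}^2>0$, contradicting $u(\cdot,b)=0$. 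Hence $u(\cdot,t_1)=0$ for every $t_1\in[a,b)$, and together with the hypothesis at $t=b$, $u\equiv0$ on $[a,b]$.

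The main obstacle is bookkeeping around the set where $u$ vanishes: one must be careful that the frequency is only defined where $\|u\|_{L^2(d\nu)}>0$, and argue that this set is relatively open and, via the differential inequality, cannot have the endpoint $b$ as a limit point from the left unless $u$ was already zero earlier. I expect the technical heart is verifying that the ``lower-order terms'' in the logarithmic derivative of $\|u\|_{L^2(d\nu)}^2$ are genuinely bounded on $[a,b]$ under the paper's standing assumptions (the Bakry-\'Emery bound $\Ric_f\leq\frac{\kappa}{2\tau}g$ and the $W^{2,2}(d\nu)$ hypotheses), so that $C$ is finite; this should follow from the same estimates used to prove Theorem \ref{thm: monotonicity of frequency on RFs} and may simply be cited from there.
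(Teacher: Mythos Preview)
Your approach is correct and is essentially the paper's: control $(\log I)'$ via the monotonicity of $U$, then integrate. The one point to sharpen is that there are no ``lower-order terms'': for a solution of the heat equation, the computation in the proof of Theorem~\ref{thm: monotonicity of frequency on RFs} gives the \emph{exact} identity $I'=\tfrac{2}{\tau}D$, hence
\[
(\log I)'=\frac{2}{\tau}\frac{D}{I}=2e^{-\int\frac{1-\kappa}{\tau}}\,\frac{U}{\tau}
\]
with no remainder; the contributions from $\partial_t(d\nu)=-\tfrac{\Delta K}{K}\,d\nu$ and from $\partial_t u=\Delta u$ cancel after integration by parts, and the Bakry--\'Emery bound plays no role at this step (it enters only through $D'$ in the monotonicity proof). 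With this identity the paper integrates directly over $[a,b]$ to obtain $I(b)\geq I(a)\exp\big(2U(a)\int_a^b e^{-\int\frac{1-\kappa}{\tau}}\tau^{-1}\,dt\big)$, which is precisely your inequality with the constant $C$ made explicit; your continuation argument around the vanishing set of $I$ is the implicit justification for dividing by $I$ throughout.
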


The results of this paper also extend to more general parabolic equations. In this case, though $U$ need no longer be monotone, its derivative can be bounded suitably to imply backwards uniqueness.

\begin{theorem}
[More general heat operators]\label{thm: more general heat operators}
Let $u:M\times [a,b]\to \R$ satisfy $|(\partial_t-\Delta) u|\leq C(t)(|\nabla u|+|u|)$ along the Ricci flow $(M^n,g(t))_{t \in I}$. If $u(\cdot,b)=0$, then $u\equiv 0$ for all $t\in [a,b]$.
\end{theorem}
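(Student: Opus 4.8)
The plan is to run the proof of Corollary~\ref{cor: backwards uniqueness}, with the monotonicity of $U$ replaced by a one-sided differential inequality. As in the setup for Theorem~\ref{thm: monotonicity of frequency on RFs}, fix a conjugate heat kernel $K=(4\pi\tau)^{-\frac n2}e^{-f}$ based at a space-time point whose time coordinate exceeds $b$, and put $d\nu=K\,dV$, with drift Laplacian $\dL_f$ and Bakry--\'Emery bound $\Ric_f\le\frac{\kappa}{2\tau}g$. Write $F:=(\partial_t-\Delta)u$, so $|F|\le C(t)(|\nabla u|+|u|)$, and set $h(t):=\|u(\cdot,t)\|_{L^2(d\nu)}^2$. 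By forward uniqueness for $(\partial_t-\Delta)u=F$ (standard, the coefficients being pointwise bounded), once $h$ vanishes it stays zero; as $h(b)=0$ and $h$ is continuous, either $h\equiv0$ on $[a,b]$, in which case $u\equiv0$ as desired, or there is $\beta\in(a,b]$ with $h>0$ on $[a,\beta)$ and $h(\beta)=0$. It remains to rule out the latter.

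On $[a,\beta)$ the frequency $U(t)$ of (\ref{eqn: frequency defn}) is finite and nonpositive. I would differentiate it as in the proof of Theorem~\ref{thm: monotonicity of frequency on RFs}, now retaining the terms produced by $F\neq0$; using the conjugate heat equation and $\partial_t(dV)=-\Scal\,dV$,
\[
\bigl(\|u\|_{L^2(d\nu)}^2\bigr)'=2\!\int uF\,d\nu-2\|\nabla u\|_{L^2(d\nu)}^2,\qquad
\bigl(\|\nabla u\|_{L^2(d\nu)}^2\bigr)'=-2\|\Hess u\|_{L^2(d\nu)}^2-2\!\int F\,\dL_f u\,d\nu,
\]
the second after integrating by parts to avoid differentiating $F$. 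The $F$-free part of $U'$ is then exactly the nonnegative quantity of Theorem~\ref{thm: monotonicity of frequency on RFs} — a Cauchy--Schwarz defect plus the Bakry--\'Emery correction, controlled via $\|\dL_f u\|_{L^2(d\nu)}^2=\|\Hess u\|_{L^2(d\nu)}^2+\int\Ric_f(\nabla u,\nabla u)\,d\nu$ and $\Ric_f\le\frac{\kappa}{2\tau}g$ — while the $F$-terms are bounded using $|F|\le C(|\nabla u|+|u|)$, Cauchy--Schwarz, and Young's inequality. Writing $\mathcal N:=\|\nabla u\|_{L^2(d\nu)}^2/\|u\|_{L^2(d\nu)}^2$, so $U=-\tau\mathcal N\,e^{-\int\frac{1-\kappa}{\tau}}$ and hence $\mathcal N\asymp|U|$ on the compact $[a,b]$, this should yield a differential inequality
\[
U'(t)\ \ge\ -\Lambda(t)\bigl(1+U(t)^2\bigr)\qquad\text{on }[a,\beta),
\]
with $\Lambda$ continuous on $[a,b]$, depending only on $C$, $\kappa$, $\tau$, and the geometry.

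To finish, combine this with the evolution of $h$ itself. The first displayed equation together with the bound $\bigl|\int uF\,d\nu\bigr|\le C(\|u\|_{L^2(d\nu)}\|\nabla u\|_{L^2(d\nu)}+\|u\|_{L^2(d\nu)}^2)$ gives
\[
\frac{d}{dt}\log h=\frac{2U}{\tau}\,e^{\int\frac{1-\kappa}{\tau}}+g(t),\qquad|g(t)|\le c(t)\bigl(1+|U(t)|\bigr),
\]
with $c$ continuous. If $h(\beta)=0$ then $\log h\to-\infty$ as $t\uparrow\beta$, which by this identity forces $\int_a^\beta|U|\,dt=\infty$, hence $U(t)\to-\infty$ as $t\uparrow\beta$. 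On the other hand the differential inequality limits how fast $U$ can drop: dividing by $1+U^2$ gives $\tfrac{d}{dt}\arctan U\ge-\Lambda$, and propagating this from the finite value $U(a)$ across $[a,\beta)$ — on subintervals short enough that $\int\Lambda$ is small — produces a lower bound for $U$ on $[a,\beta)$, contradicting $U(t)\to-\infty$. (Should the sharper estimate $U'\ge-\Lambda(1+|U|)$ hold, this last step is an immediate Gr\"onwall bound for $v:=-U\ge0$.)

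I expect the main obstacle to be precisely this final step. A Riccati-type bound $U'\ge-\Lambda(1+U^2)$ does not by itself prevent $U$ from blowing down at $\beta$, so closing the argument hinges on the interaction with the $\log h$ bound and a careful time-localization; equivalently, the heart of the matter is to strengthen the differential inequality for $U$ to one linear in $|U|$, which requires showing that the superlinear (size $\mathcal N^{3/2}$) error terms arising from $\int uF\,d\nu$ and $\int F\,\dL_f u\,d\nu$ can be absorbed into $\|\Hess u\|_{L^2(d\nu)}^2$ and the Cauchy--Schwarz defect in the monotone part of $U'$.
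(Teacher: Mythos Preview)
Your diagnosis is exactly right, and the gap you identify is real: the Riccati bound $U'\ge -\Lambda(1+U^2)$ does not close the argument. The time-localization you propose cannot save it, because an ODE $U'=-\Lambda(1+U^2)$ blows down like $-1/(\Lambda(\beta-t))$, so $\int^\beta|U|=\infty$, which is perfectly consistent with $\log h\to-\infty$; there is no contradiction to extract from the interaction of the two inequalities alone.

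The good news is that the linear inequality you hope for actually holds, and the mechanism is precisely the absorption you suggest in your last paragraph --- but you have to shift the Cauchy--Schwarz, not absorb into the original one. Writing $A=\dL_f u$ and $F=(\partial_t-\Delta)u$, the $F$-perturbed expression you obtain for $e^{-\int\frac{1-\kappa}{\tau}}I^2U'$ (after the Bakry--\'Emery step) is
\[
2\tau\Bigl[\,I\langle A,A+F\rangle-\langle u,A\rangle\langle u,A+F\rangle\,\Bigr].
\]
Complete the square with $A+\tfrac12 F$ in place of $A$:
\[
I\langle A,A+F\rangle-\langle u,A\rangle\langle u,A+F\rangle
=\Bigl[I\|A+\tfrac12F\|^2-\langle u,A+\tfrac12F\rangle^2\Bigr]-\tfrac14 I\|F\|^2+\tfrac14\langle u,F\rangle^2.
\]
The bracket is nonnegative by Cauchy--Schwarz and the last term is nonnegative, so $I^2U'\ge -\tfrac{\tau}{2}e^{\int\frac{1-\kappa}{\tau}}I\|F\|^2$. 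Now $\|F\|^2\le 2C^2(\|\nabla u\|^2+I)=2C^2(-D/\tau+I)$ gives
\[
U'\ \ge\ C^2\bigl(U-\tau\,e^{\int\frac{1-\kappa}{\tau}}\bigr)\ \ge\ C^2\bigl(U-\tau(a)\bigr),
\]
which is linear in $U$; Gr\"onwall then bounds $U$ below on $[a,\beta)$ and your $\log h$ identity finishes the contradiction exactly as in the heat-equation case. This is the paper's argument: the point you were missing is that the Cauchy--Schwarz has to be applied to the operator $\dL_f+\tfrac12(\partial_t-\Delta)$ rather than to $\dL_f$ alone.
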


This paper is organized as follows: Section \ref{sec: monotonicity for heat operator} defines the parabolic frequency and proves Theorem \ref{thm: monotonicity of frequency on RFs} and Corollary \ref{cor: backwards uniqueness}; Section \ref{sec: More general parabolic equations} proves Theorem \ref{thm: more general heat operators}, concerning more general parabolic equations.
\section*{\small\bf Acknowledgements}
The authors are indebted to William Minicozzi for his continual guidance and support, as well as to Tristan Ozuch for useful discussions about Ricci flow. Part of this work was completed while the first author was funded by a National Science Foundation Graduate Research Fellowship.

\section{The heat equation}\label{sec: monotonicity for heat operator}

To define the parabolic frequency, let $(M^n,g(t))$ for $t\in [t_0,t_1)$ be a Ricci flow evolving by
\begin{equation}\label{eqn: Ricci flow eqn}
    \partial_tg=-2\Ric,
\end{equation}
and let $\tau(t)=t_1-t$ denote backwards time. As in the definition of Almgren's elliptic frequency as well as Poon's parabolic frequency on Euclidean space, fix a point $x_1\in M$ at which the frequency function is centered. Let 
\begin{equation}
    K=(4\pi \tau)^{-\frac{n}{2}}e^{-f}
\end{equation}
be the positive solution of the conjugate heat equation,
\begin{equation}
    \partial_t K=-\Delta K+\Scal K,
\end{equation}
approaching a $\delta$-function centered at $x_1\in M$ as $t\nearrow t_1$. Here and throughout, $\Scal$ denotes the scalar curvature. Note that $K$ is simply the conjugate heat kernel centered at $(x_1,t_1)$.
As a consequence of the definition of $K$, the function $f$ evolves by the following backwards heat-type equation
\begin{equation}\label{eqn: evolution of f}
    \partial_tf = -\Delta f-\Scal +|\nabla f|^2+\frac{n}{2\tau},
\end{equation}
which is the same equation Perelman used in the proof of the monotonicity of his $\mathcal{W}$-functional along Ricci flow \cite[\S 3]{P}.
For the parabolic frequency on Ricci flows, the conjugate heat kernel is the correct generalization of the backwards heat kernel on Euclidean space, which Poon used in his definition of parabolic frequency \cite{P}.

Assume that $(M^n,g(t),f(t))$ satisfies a time-dependent Bakry-\'Emery Ricci curvature bound
\begin{equation}\label{eqn: BE curvature bound}
    \Ric_f\leq \frac{\kappa}{2\tau}g,
\end{equation}
for some constant $\kappa=\kappa(t)$. If $M^n$ is compact, such a function $\kappa$ always exists and depends only on the geometry of the flow at each time \cite{Hu}. 
Note that if $(M^n,g(t),f(t))$ is a subsolution of the shrinking Ricci soliton equation (compact or not), then (\ref{eqn: BE curvature bound}) holds with $\kappa \equiv 1$.

Let 
\begin{equation}
    d\nu=KdV=(4\pi \tau)^{-\frac{n}{2}}e^{-f}\,dV
\end{equation}
denote the corresponding conjugate heat kernel measure and let $\dL_f$ be the corresponding drift Laplacian, defined by
\begin{equation}\label{eqn: drift Laplacian}
    \dL_{f}u=\Delta u-\langle \nabla f,\nabla u\rangle = e^{f}\,\Div\,(e^{-f}\nabla u).
\end{equation}
The drift Laplacian $\dL_f$ is self-adjoint on the weighted Sobolev space $W^{1,2}(d\nu)$, satisfying
\begin{align}\label{eqn: self-adjointness of drift Laplacian}
    \int_M(\dL_{f} u)v \, d\nu
    =-\int_M\langle \nabla u,\nabla v\rangle\,d\nu,
\end{align}
which follows directly from the definition of $\dL_f$ and $d\nu$ via integration by parts.

Differential operators like $\dL_f$ which are naturally associated to weighted measures, have proven to be invaluable in analysis and geometry; they have been used 
in Ricci and mean curvature flow to analyze solitons \cite{CM2, CZ, MW}, by Witten in his study of Morse theory \cite{W}, and recently to generalize the positive mass theorem to weighted manifolds and derive a new monotonicity formula for spinors in the Ricci flow \cite{BO}. See \cite{BH} for a study of eigenvalue estimates of drift Laplacians.

The crucial property satisfied by the conjugate heat kernel measure is its evolution along the Ricci flow. 
Since under the Ricci flow equation (\ref{eqn: Ricci flow eqn}) the volume form $dV$ evolves by
\begin{equation}
    \partial_t(dV)=-\Scal\, dV,
\end{equation}
the conjugate heat kernel measure evolves by,
\begin{equation}\label{eqn: evolution of conjugate heat kernel measure}
    \partial_t(d\nu)
    =-(\Delta K)dV
    =-\frac{\Delta K}{K}d\nu.
\end{equation}

Finally, the parabolic frequency can be defined: given $[a,b]\subset [t_0,t_1)$ and a function $u:M\times [a,b]\to \R$ with
$u,\partial_tu \in W^{2,2}(d\nu)$ for each time $t\in [a,b]$, define
\begin{align}
    I(t)
    &=\int_M|u|^2\;d\nu, \label{eqn: defn of I}\\
    D(t)
    &=-\tau\int_M|\nabla u|^2 \;d\nu
    =\tau\int_M\langle u,\dL_{f}u\rangle \; d\nu, \label{eqn: defn of D}\\
    U(t)&=e^{\int \frac{1-\kappa(t)}{\tau(t)}dt}\frac{D(t)}{I(t)}. \label{eqn: defn of U}
\end{align}
With this paper's convention, $U$ is always non-positive. The integral $\int \frac{1-\kappa(t)}{\tau(t)}dt$ in the exponential term is an error term which vanishes if $(M^n,g(t),f(t))$ is a subsolution of the shrinking Ricci soliton equation, since in this case, one may choose $\kappa \equiv 1$.

The following lemma is used in the computation of the derivative of $D$.

\begin{lemma}\label{lem: weighted L2 norm of Hessian}
If $u\in W^{2,2}(d\nu)$ for each time, then 
\begin{equation}
    \int_M |\Hess_u|^2\,d\nu=
    \int_M\left(|\dL_f u|^2-\Ric_f(\nabla u,\nabla u)\right)d\nu.
\end{equation}
\end{lemma}

\begin{proof}
The drift Bochner formula gives
\begin{align}
    \frac{1}{2}\dL_f|\nabla u|^2
    &=|\Hess_u|^2+\langle \nabla u,\nabla \dL_f u\rangle +\Ric_f(\nabla u,\nabla u).
\end{align}
Self-adjointness of $\dL_f$ with respect to the conjugate heat kernel measure and integration by parts give
\begin{align}
    0
    &=\frac{1}{2}\int_M(\dL_f|\nabla u|^2) \, d\nu \label{ques: 1} \\
    &=\int_M\left(|\Hess_u|^2+\langle \nabla u,\nabla \dL_f u\rangle +\Ric_f(\nabla u,\nabla u)\right)d\nu\nonumber\\
    &=\int_M\left(|\Hess_u|^2-|\dL_f u|^2 +\Ric_f(\nabla u,\nabla u)\right)d\nu.\nonumber
\end{align}
\end{proof}

\begin{proof}
[Proof of Theorem \ref{thm: monotonicity of frequency on RFs}]
The monotonicity of $U$ is proven by computing $I'$ and $D'$, and then using the Cauchy-Schwarz inequality to deduce that the difference $I^2U'=ID'-DI'$ is nonnegative.

On a Ricci flow evolving by (\ref{eqn: Ricci flow eqn}), the norm of the gradient of a function $u(x,t)$ evolves by (see \cite[Lemma 4.8]{CM3} for a computation):
\begin{equation}\label{eqn: evolution of gradient squared}
    \partial_t|\nabla u|^2=2\Ric(\nabla u,\nabla u)+2\langle \nabla u,\nabla \partial_tu\rangle.
\end{equation}
Combined with the Bochner formula, it follows that if $u$ solves the heat equation, then
\begin{align}
    (\partial_t-\Delta)|\nabla u|^2
    &=2\Ric(\nabla u,\nabla u)+2\langle \nabla u,\nabla \partial_t u\rangle \nonumber\\
    &\qquad \qquad\qquad -2|\Hess_u|^2-2\langle \nabla u,\nabla \Delta u\rangle -2\Ric(\nabla u,\nabla u) \nonumber\\
    &=-2|\Hess_u|^2. \label{eqn: heat operator on gradient squared}
\end{align}
Hence if $u$ solves the heat equation, then (\ref{eqn: evolution of conjugate heat kernel measure}), integration by parts, and the formula $\Delta u^2=2u\Delta u+2|\nabla u|^2$ imply
\begin{align}
    I'(t)
    &=\int_M\left(2u \partial_t u-u^2\frac{\Delta K}{K}\right)d\nu \\
    &=\int_M\left(2u \Delta u-2u\Delta u-2|\nabla u|^2\right)d\nu \nonumber\\
    &=-2\int_M|\nabla u|^2 \,d\nu \nonumber\\
    &=\frac{2}{\tau(t)}D(t).\nonumber
\end{align}
Hence $I'(t)=\frac{2D(t)}{\tau(t)}$. To compute the derivative of $D$, use the evolution equation of the conjugate heat kernel measure (\ref{eqn: evolution of conjugate heat kernel measure}), integration by parts, equation (\ref{eqn: heat operator on gradient squared}) and Lemma \ref{lem: weighted L2 norm of Hessian} to obtain
\begin{align}
    D'(t)
    &=-\tau\int_M\left(\frac{\tau'}{\tau}|\nabla u|^2 + \partial_t |\nabla u|^2-|\nabla u|^2\frac{\Delta K}{K}\right)d\nu \\
    &=-\tau\int_M\left(-\frac{1}{\tau}|\nabla u|^2 + (\partial_t -\Delta)|\nabla u|^2\right)d\nu
    \nonumber\\
    &=-\tau\int_M\left(-\frac{1}{\tau}|\nabla u|^2-2|\Hess_u|^2\right)d\nu \nonumber\\
    &=-\tau\int_M\left(-\frac{1}{\tau}|\nabla u|^2-2|\dL_fu|^2+2\Ric_f(\nabla u,\nabla u)\right)d\nu \nonumber\\
    &=2\tau\int_M|\dL_fu|^2\,d\nu-2\tau\int_M\left(\Ric_f(\nabla u,\nabla u)-\frac{1}{2\tau}|\nabla u|^2\right)d\nu.\nonumber
\end{align}
Applying the Bakry-\'Emery curvature bound (\ref{eqn: BE curvature bound}) to the last term above gives
\begin{align}
    D'(t)
    &\geq 2\tau\int_M|\dL_fu|^2\,d\nu-(\kappa-1)\int_M|\nabla u|^2\,d\nu \\
    &= 2\tau\int_M|\dL_fu|^2\,d\nu+\frac{(\kappa-1)}{\tau}D(t).\nonumber
\end{align}
Using this bound for $D'$ allows the derivative of the frequency function to be bounded as follows:
\begin{align}
    I^2U'
    &=I^2\left(\frac{1-\kappa}{\tau}e^{\int \frac{1-\kappa}{\tau}}\frac{D}{I}+e^{\int \frac{1-\kappa}{\tau}}\frac{ID'-I'D}{I^2}\right) \\
    &=e^{\int \frac{1-\kappa}{\tau}}\left(\frac{1-\kappa}{\tau}ID+ID'-\frac{2}{\tau}D^2\right) \nonumber\\
    &\geq e^{\int \frac{1-\kappa}{\tau}}\left(\frac{1-\kappa}{\tau}ID+2\tau I\int_M|\dL_fu|^2\,d\nu+\frac{\kappa-1}{\tau}ID-\frac{2}{\tau}D^2\right) \nonumber\\
    &= e^{\int \frac{1-\kappa}{\tau}}\left(2\tau I\int_M|\dL_fu|^2\,d\nu-\frac{2}{\tau}D^2\right) \nonumber\\
    &=e^{\int \frac{1-\kappa}{\tau}}\left(2\tau\left(\int_M|u|^2\,d\nu\right)\left(\int_M|\dL_fu|^2\,d\nu\right)-\frac{2}{\tau}\left(\tau\int_M(u\dL_fu)\,d\nu\right)^2\right) \nonumber\\
    &=2\tau e^{\int \frac{1-\kappa}{\tau}}\left(\left(\int_M|u|^2\,d\nu\right)\left(\int_M|\dL_fu|^2\,d\nu\right)-\left(\int_M(u\dL_fu)\,d\nu\right)^2\right) \nonumber\\
    &\geq 0,\nonumber
\end{align}
where the last inequality is the Cauchy-Schwarz inequality. If $U'(t)=0$, then equality in the Cauchy-Schwarz inequality implies $\dL_fu=c(t)u$. The function $c(t)$ can be determined by noting that if $\dL_fu=c(t)u$, then
\begin{align}
    I(t)U(t)
    &=e^{\int \frac{1-\kappa}{\tau}}D(t) \\
    &=\tau e^{\int \frac{1-\kappa}{\tau}}\int_M\langle u,\dL_fu\rangle \,d\nu \nonumber\\
    &=\tau e^{\int \frac{1-\kappa}{\tau}} c(t)\int_M|u|^2 \,d\nu \nonumber\\
    &=\tau e^{\int \frac{1-\kappa}{\tau}} c(t)I(t),\nonumber
\end{align}
which implies that $c(t)=e^{-\int \frac{1-\kappa}{\tau}}\frac{U(t)}{\tau}$.
\end{proof}

\begin{proof}
[Proof of Corollary \ref{cor: backwards uniqueness}]
The proof of Theorem \ref{thm: monotonicity of frequency on RFs} shows that $I'=\frac{2}{\tau}D$. Hence 
\begin{equation}
    \log(I)'
    =\frac{I'}{I}
    =\frac{2}{\tau}\frac{D}{I}
    =2e^{-\int \frac{1-\kappa}{\tau}}\frac{U}{\tau}. \label{eqn: log-I-prime}
\end{equation}
Integration and monotonicity of $U$ yields
\begin{equation}
    \log I(b)-\log I(a)
    \geq 2\int_a^b e^{-\int_a^t \frac{1-\kappa(s)}{\tau(s)}ds}\frac{U(t)}{\tau(t)}dt \label{eqn: difference of log(I)}
    \geq 2U(a)\int_a^be^{-\int_a^t \frac{1-\kappa(s)}{\tau(s)}ds}\frac{1}{\tau(t)}dt.
\end{equation}
Note that the latter integral is finite since $0<t_1-b\leq \tau\leq t_1-a$ on $[a,b]\subset [t_0,t_1)$.
Thus, exponentiating gives
\begin{equation}
    I(b)\geq I(a)\exp\left(2U(a)\int_a^be^{-\int_a^t \frac{1-\kappa(s)}{\tau(s)}ds}\tau^{-1}(t)dt\right), \label{eqn: backward uniqueness}
\end{equation}
from which backwards uniqueness follows immediately.
\end{proof}

\section{More general parabolic equations}\label{sec: More general parabolic equations}

This section extends the results of the previous section to solutions of more general parabolic equations on Ricci flows; see \cite{P, ESS, CM3} for the static case.

In this section, again assume that $(M^n,g(t))$, for $t\in [t_0,t_1)$, is a Ricci flow evolving by (\ref{eqn: Ricci flow eqn}) with a bound on curvature and its first derivative. As before, let $\tau(t)=t_1-t$ denote backwards time and let
\begin{equation}
    K=(4\pi \tau)^{-\frac{n}{2}}e^{-f}
\end{equation}
be the conjugate heat kernel centered at some point $(x_1,t_1)$ in spacetime. Again assume there exist time-dependent constants $\kappa=\kappa(t)$ such that on $M\times [t_0,t_1)$ the bound (\ref{eqn: BE curvature bound}) holds. Let $d\nu =K\, dV$ denote the corresponding conjugate heat kernel measure and $\dL_f$ is the corresponding drift Laplacian, which is self-adjoint on the weighted Sobolev space $W^{1,2}(d\nu)$.

Given $[a,b]\subset [t_0,t_1)$ and a function $u:M\times [a,b]\to \R$ with $u,\partial_tu \in W^{2,2}(d\nu)$ for each time $t\in [a,b]$, define the functions $I,D,U$ as in (\ref{eqn: defn of I})-(\ref{eqn: defn of U}).

\begin{theorem}
\label{thm: quantitative derivative bounds, general case}
Let $u$ satisfy 
\begin{equation}
    |(\partial_t-\Delta) u|\leq C(t)(|\nabla u|+|u|) \label{eqn: general operator}
\end{equation}
along the Ricci flow $(M,g(t))$ for $t\in [a,b]$. Then
\begin{align}
    \log(I)' &\geq (2+C)e^{-\int \frac{1-\kappa}{\tau}}\frac{U}{\tau}-3C \\
    U' &\geq C^2(U-\tau) \\
    C^2 &\geq \log(\tau(a)-U)'.
\end{align}
\end{theorem}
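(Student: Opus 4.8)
The three inequalities in Theorem \ref{thm: quantitative derivative bounds, general case} should follow by redoing the computations in the proof of Theorem \ref{thm: monotonicity of frequency on RFs}, but now carrying along the error term coming from $(\partial_t-\Delta)u$, which is no longer zero but controlled by $C(|\nabla u|+|u|)$ via \eqref{eqn: general operator}. The plan is to handle them in the order (i) $\log(I)'$ bound, (ii) $U'$ bound, (iii) the $C^2 \ge \log(\tau(a)-U)'$ bound, since the third is a clean consequence of the second.

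\textit{Step (i).} Recompute $I'(t)=\int(2u\,\partial_t u - u^2 \Delta K/K)\,d\nu$. Writing $\partial_t u = \Delta u + (\partial_t-\Delta)u$ and using $\Delta u^2 = 2u\Delta u + 2|\nabla u|^2$ together with integration by parts against $d\nu$, the main terms again produce $-2\int|\nabla u|^2\,d\nu = \tfrac{2}{\tau}D$, and there is a remainder $2\int u\,(\partial_t-\Delta)u\,d\nu$. Bound this by $2C\int |u|(|\nabla u|+|u|)\,d\nu$ and apply Cauchy--Schwarz / Young: $\int |u||\nabla u|\,d\nu \le \tfrac12\int |\nabla u|^2\,d\nu + \tfrac12\int |u|^2\,d\nu$, and $\int |u|^2\,d\nu = I$. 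This yields $I' \ge \tfrac{2}{\tau}D + C\!\left(\tfrac{1}{\tau}D - 3I\right)$ (the constant $3$ absorbing the two copies of $I$ plus the $\tfrac12$ from Young, after checking the bookkeeping), hence dividing by $I$ and using $D/I = e^{-\int(1-\kappa)/\tau}U$ gives $\log(I)' \ge (2+C)e^{-\int(1-\kappa)/\tau}\tfrac{U}{\tau} - 3C$, matching the claim. One must be careful with the sign: $D\le 0$, so $\tfrac{1}{\tau}D$ is a negative contribution, which is why it appears with coefficient $C>0$ rather than being thrown away.

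\textit{Step (ii).} Recompute $D'$. The identity $\partial_t|\nabla u|^2 = 2\Ric(\nabla u,\nabla u) + 2\langle\nabla u,\nabla\partial_t u\rangle$ still holds, but now $(\partial_t-\Delta)|\nabla u|^2 = -2|\Hess_u|^2 + 2\langle\nabla u,\nabla(\partial_t-\Delta)u\rangle$. Carrying this error term through the same chain of steps as in the proof of Theorem \ref{thm: monotonicity of frequency on RFs} (evolution of $d\nu$, integration by parts, Lemma \ref{lem: weighted L2 norm of Hessian}, and the Bakry--\'Emery bound \eqref{eqn: BE curvature bound}), one obtains
\begin{equation}
    D' \ge 2\tau\int_M |\dL_f u|^2\,d\nu + \frac{\kappa-1}{\tau}D - 2\tau\int_M \langle \nabla u, \nabla(\partial_t-\Delta)u\rangle\,d\nu.
\end{equation}
Integrating the last term by parts moves it to $2\tau\int (\dL_f u)(\partial_t-\Delta)u\,d\nu$, which by \eqref{eqn: general operator} and Cauchy--Schwarz is bounded by $2\tau C\,\|\dL_f u\|_{L^2(d\nu)}(\|\nabla u\|_{L^2(d\nu)}+\|u\|_{L^2(d\nu)})$. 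Then run the $I^2 U'$ computation from the monotonicity proof verbatim: the main terms collapse to $2\tau e^{\int(1-\kappa)/\tau}(\|u\|^2\|\dL_f u\|^2 - \langle u,\dL_f u\rangle^2)\ge 0$ by Cauchy--Schwarz, and the error contributes something controlled by $C^2$ times $I^2$ and $I^2\cdot\tau$ after another round of Young's inequality to absorb the cross terms into the nonnegative Cauchy--Schwarz expression. The precise packaging should give $U' \ge C^2(U-\tau)$; identifying exactly which Young splitting produces the clean coefficient $C^2$ and the term $-C^2\tau$ (rather than $-C^2\tau\,e^{\pm\int\cdots}$) is the delicate part — one expects $U\le 0 \le \tau$ so $U-\tau<0$ and the inequality is a genuine lower bound on how fast $U$ can decrease.

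\textit{Step (iii).} From $U' \ge C^2(U-\tau)$ and $\tau' = -1$, compute $(\tau(a)-U)' = -U' + $ (nothing, since $\tau(a)$ is constant) $= -U' \le -C^2(U-\tau) = C^2(\tau - U)$. Wait — one needs $\tau(a)$ not $\tau$; since $\tau(t)\le \tau(a)$ on $[a,b]$, we have $\tau - U \le \tau(a) - U$, hence $(\tau(a)-U)' \le C^2(\tau(a)-U)$, and dividing by the positive quantity $\tau(a)-U$ (positive because $U\le 0<\tau(a)$) gives $\log(\tau(a)-U)' \le C^2$, i.e. the third inequality.

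\textbf{Main obstacle.} The conceptual content is routine — it is the same three computations as in Section \ref{sec: monotonicity for heat operator} with an inhomogeneous term — so the real difficulty is purely bookkeeping: tracking signs (recall $D,U\le 0$ while $\tau,I\ge 0$), choosing the Young's-inequality weights so that the cross terms are genuinely absorbed by the nonnegative Cauchy--Schwarz leftover in the $U'$ computation, and making sure no stray factors of $e^{\pm\int(1-\kappa)/\tau}$ survive in the final clean statements. I would double-check the constant $3$ in the $\log(I)'$ bound and the absence of an exponential in $U'\ge C^2(U-\tau)$ against the static-case references \cite{CM3, ESS}, since those are exactly the places an off-by-a-factor error would hide.
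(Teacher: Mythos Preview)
Your Steps (i) and (iii) are exactly the paper's argument, including the bookkeeping that produces the constant $3$.

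Step (ii) is where your plan diverges from the paper and has a genuine gap. You propose to keep the Cauchy--Schwarz step as $I\|\dL_f u\|^2 - \langle u,\dL_f u\rangle^2 \ge 0$ and then ``absorb the cross terms into the nonnegative Cauchy--Schwarz expression'' via Young. That does not work: the quantity $I\|\dL_f u\|^2 - \langle u,\dL_f u\rangle^2$ can be zero (whenever $\dL_f u$ is a scalar multiple of $u$), so there is nothing to absorb into. The cross terms you need to control are $2\tau I\langle \dL_f u,(\partial_t-\Delta)u\rangle$ from $ID'$ together with $-2\tau\langle u,\dL_f u\rangle\langle u,(\partial_t-\Delta)u\rangle$ from $I'D$, and neither is individually dominated by the Cauchy--Schwarz defect.

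The paper's resolution is to \emph{shift the operator before applying Cauchy--Schwarz}. One rewrites both $D$ and $I'$ in terms of $\big(\dL_f+\tfrac{1}{2}(\partial_t-\Delta)\big)u$ by completing the square:
\[
|\dL_f u|^2 + (\dL_f u)(\partial_t-\Delta)u = \big|\big(\dL_f+\tfrac12(\partial_t-\Delta)\big)u\big|^2 - \tfrac14|(\partial_t-\Delta)u|^2,
\]
and similarly for $I'D$. Cauchy--Schwarz is then applied to the pair $\big(u,\,(\dL_f+\tfrac12(\partial_t-\Delta))u\big)$, which swallows all the cross terms exactly, leaving only the clean error
\[
I^2U' \;\ge\; -\tfrac{\tau}{2}\,e^{\int(1-\kappa)/\tau}\,I\!\int_M|(\partial_t-\Delta)u|^2\,d\nu,
\]
from which $|(\partial_t-\Delta)u|^2\le 2C^2(|\nabla u|^2+|u|^2)$ gives $U'\ge C^2(U-\tau e^{\int(1-\kappa)/\tau})$. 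Your worry about the stray exponential is legitimate: the paper removes it by taking $\kappa\ge 1$ (always arrangeable by enlarging $\kappa$) and normalizing the antiderivative so that $e^{\int(1-\kappa)/\tau}\le 1$, whence $U'\ge C^2(U-\tau)$.
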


\begin{proof}
Differentiating $I(t)$ and using the elementary inequality $2ab\leq a^2+b^2$ gives
\begin{align}
    I'
    &=\int_M\left(2u \partial_t u-u^2\frac{\Delta K}{K}\right)d\nu \\
    &=\int_M\left(2u \partial_t u-2u\Delta u-2|\nabla u|^2\right)d\nu \nonumber\\
    &=\frac{2}{\tau}D+2\int_Mu(\partial_t-\Delta)u \,d\nu \nonumber\\
    &\geq \frac{2}{\tau}D-2C\int_M|u|(|\nabla u|+|u|) \,d\nu \nonumber\\
    &= \frac{2}{\tau}D-2CI-2C\int_M|u||\nabla u| \,d\nu \nonumber\\
    &\geq \frac{2}{\tau}D-3CI+\frac{C}{\tau}D \nonumber\\
    &= \frac{2+C}{\tau}D-3CI \nonumber\\
    &= (2+C)e^{-\int \frac{1-\kappa}{\tau}}I\frac{U}{\tau}-3CI,\nonumber
\end{align}
which proves the first claim.

On a Ricci flow evolving by (\ref{eqn: Ricci flow eqn}), the norm of the gradient of a function $u(x,t)$ evolves by (\ref{eqn: evolution of gradient squared}).
Combined with the Bochner formula, this implies that
\begin{align}\label{eqn: heat operator on gradient squared (general case)}
    (\partial_t-\Delta)|\nabla u|^2
    &=2\Ric(\nabla u,\nabla u)+2\langle \nabla u,\nabla \partial_t u\rangle \\
    &\qquad \qquad\qquad -2|\Hess_u|^2-2\langle \nabla u,\nabla \Delta u\rangle -2\Ric(\nabla u,\nabla u) \nonumber\nonumber\\
    &=-2|\Hess_u|^2+2\langle \nabla u,\nabla (\partial_t-\Delta)u\rangle.\nonumber
\end{align}
Write $D$ as 
\begin{equation}\label{eqn: D rewritten}
    D=\tau\int_Mu\left(\dL_f+\frac{1}{2}(\partial_t-\Delta)\right)u\,d\nu - \frac{\tau}{2}\int_Mu(\partial_t-\Delta)u\,d\nu.
\end{equation}
Then by the integration by parts formula (\ref{eqn: self-adjointness of drift Laplacian}) for $\dL_f$, it follows that
\begin{align}
    I'(t)
    &=\int_M\left(2u \partial_t u-u^2\frac{\Delta K}{K}\right)d\nu \nonumber\\
    &=\int_M\left(2u \partial_t u-2u\Delta u-2|\nabla u|^2\right)d\nu \nonumber\\
    &=2\int_Mu(\dL_f+\partial_t-\Delta)u \,d\nu \nonumber\\
    &=2\int_Mu\left(\dL_f+\frac{1}{2}(\partial_t-\Delta)\right)u \,d\nu
    +\int_Mu(\partial_t-\Delta)u \,d\nu. \label{eqn: I' rewritten}
\end{align}
Combining (\ref{eqn: I' rewritten}) and (\ref{eqn: D rewritten}) yields
\begin{equation}\label{eqn: ID'}
    I'D=2\tau \left(\int_Mu\left(\dL_f+\frac{1}{2}(\partial_t-\Delta)\right)u\,d\nu\right)^2
    -\frac{\tau}{2}\left(\int_Mu(\partial_t-\Delta)u\,d\nu\right)^2.
\end{equation}
By (\ref{eqn: heat operator on gradient squared (general case)}),
\begin{align}
    D'(t)
    &=-\tau\int_M\left(\frac{\tau'}{\tau}|\nabla u|^2 + \partial_t |\nabla u|^2-|\nabla u|^2\frac{\Delta K}{K}\right)d\nu \\
    &=-\tau\int_M\left(-\frac{1}{\tau}|\nabla u|^2 + (\partial_t -\Delta)|\nabla u|^2\right)d\nu \nonumber\\
    &=-\tau\int_M\left(-\frac{1}{\tau}|\nabla u|^2-2|\Hess_u|^2+2\langle \nabla u,\nabla(\partial_t-\Delta)u\rangle \right)d\nu.\nonumber
\end{align}
Applying the integration by parts formula (\ref{eqn: self-adjointness of drift Laplacian}) for $\dL_f$ to the last term of the integrand, as well as Lemma \ref{lem: weighted L2 norm of Hessian}, gives
\begin{align}
    D'(t)
    &=-\tau\int_M\left(-\frac{1}{\tau}|\nabla u|^2-2|\dL_fu|^2+2\Ric_f(\nabla u,\nabla u)-2(\dL_fu)(\partial_t-\Delta)u\right)d\nu \\
    &=2\tau\int_M(|\dL_fu|^2+(\dL_fu)(\partial_t-\Delta)u)\,d\nu-2\tau\int_M\left(\Ric_f(\nabla u,\nabla u)-\frac{1}{2\tau}|\nabla u|^2\right)d\nu, \nonumber\\
    &\geq 2\tau\int_M(|\dL_fu|^2+(\dL_fu)(\partial_t-\Delta)u)\,d\nu-(\kappa-1)\int_M|\nabla u|^2\,d\nu, \nonumber\\
    &= \frac{(\kappa-1)}{\tau}D(t) 
    +2\tau \int_M\left(\left|\left(\dL_f+\frac{1}{2}(\partial_t-\Delta)\right)u\right|^2
    -\frac{1}{4}|(\partial_t-\Delta)u|^2\right)d\nu,\nonumber
\end{align}
where the divergence theorem, integration by parts, Bochner's formula, and the Bakry-\'Emery curvature bound (\ref{eqn: BE curvature bound}), and completion of the square have been used.
Using this bound for $D'$, (\ref{eqn: ID'}), the Cauchy-Schwarz inequality, and the bound $(a+b)^2\leq 2(a^2+b^2)$ gives
\begin{align}
    I^2U'
    &=e^{\int \frac{1-\kappa}{\tau}}\left(\frac{1-\kappa}{\tau}ID+ID'-I'D\right) \\
    &\geq e^{\int \frac{1-\kappa}{\tau}}\left[\frac{1-\kappa}{\tau}ID+\frac{\kappa-1}{\tau}ID \right. \nonumber\\
    & \left. \qquad +2\tau \left[ I\left(\int_M\left|\left(\dL_f+\frac{1}{2}(\partial_t-\Delta)\right)u\right|^2d\nu \right) 
    -\left(\int_Mu\left(\dL_f+\frac{1}{2}(\partial_t-\Delta)\right)u\,d\nu\right)^2\right]  \right. \nonumber \\
    & \left. \qquad +\frac{\tau}{2}\left(\int_Mu(\partial_t-\Delta )u\,d\nu\right)^2
    -\frac{\tau}{2}I\int_M|(\partial_t-\Delta)u|^2\,d\nu \right] \nonumber \\
    &\geq -\frac{\tau}{2}e^{\int \frac{1-\kappa}{\tau}}I\int_M|(\partial_t-\Delta)u|^2\,d\nu\nonumber\\
    &\geq -\frac{\tau}{2}e^{\int \frac{1-\kappa}{\tau}}C^2I\int_M(|\nabla u|+|u|)^2\,d\nu\nonumber\\
    &\geq e^{\int \frac{1-\kappa}{\tau}}C^2I(D-\tau I).\nonumber
\end{align}
Therefore, using that $\kappa \geq 1$ and $\tau'<0$,
\begin{equation}
    U'
    \geq C^2(U-\tau e^{\int \frac{1-\kappa}{\tau}})
    \geq C^2(U-\tau)
    \geq C^2(U-\tau(a)),
\end{equation}
which proves the second and third claims in the Theorem.
\end{proof}

\begin{corollary}
[Theorem \ref{thm: more general heat operators} restated]
Let $u:M\times [a,b]\to \R$ satisfy $|(\partial_t-\Delta) u|\leq C(t)(|\nabla u|+|u|)$ along the Ricci flow $(M,g(t))$. Then
\begin{align}\label{eqn: log(I) bound}
    I(b)
    &\geq I(a)\exp\left[
    \left(2+\sup_{[a,b]}C\right)
    \left((U(a)-\tau(a))\exp\left(\int_a^bC^2 \,dt\right)+\tau(a)\right)
    \right. \\ &\left. \qquad\qquad\qquad\qquad\qquad\qquad
    \times\left(\int_a^b e^{-\int_a^t \frac{1-\kappa(s)}{\tau(s)}ds}\tau^{-1}(t)\,dt\right) 
    -3(b-a)\sup_{[a,b]}C
    \right]. \nonumber
\end{align}
In particular, if $u(\cdot,b)=0$, then $u\equiv 0$ for all $t\in [a,b]$.
\end{corollary}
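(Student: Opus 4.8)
The plan is to derive the displayed lower bound on $I(b)/I(a)$ by integrating the differential inequality for $\log(I)'$ from Theorem \ref{thm: quantitative derivative bounds, general case}, after first obtaining an explicit lower bound on $U(t)$ on $[a,b]$ from the inequality $U'\geq C^2(U-\tau(a))$. Once that bound is in hand, backwards uniqueness follows exactly as in Corollary \ref{cor: backwards uniqueness}: if $I(b)=0$ but $I(a)>0$, the inequality $I(b)\geq I(a)\exp(\text{finite})>0$ is a contradiction, forcing $I(a)=0$; since $a\in[a,b]$ was arbitrary (the same argument applies with any $t\in[a,b]$ in place of $a$, using the interval $[t,b]$), we get $I\equiv 0$, hence $u\equiv 0$.

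First I would solve the linear differential inequality $U'\geq C^2(U-\tau(a))$. Writing it as $(U-\tau(a))'\geq C^2(U-\tau(a)) + \tau(a)' \geq C^2(U-\tau(a))$ using $\tau'=-1<0$ — more simply, $U'-C^2 U\geq -C^2\tau(a)$, multiply by the integrating factor $e^{-\int_a^t C^2\,ds}$ — and integrating from $a$ to $t$, one finds that $U(t)-\tau(a)\geq (U(a)-\tau(a))\exp(\int_a^t C^2\,ds)$, i.e. $U(t)\geq (U(a)-\tau(a))\exp(\int_a^t C^2) + \tau(a)$. Here I use that $U(a)-\tau(a)<0$ (as $U\leq 0<\tau$), so enlarging the exponent to $\int_a^b C^2\,dt$ only decreases the right side; thus for all $t\in[a,b]$,
\begin{equation}
    U(t)\geq (U(a)-\tau(a))\exp\left(\int_a^b C^2\,dt\right)+\tau(a)=:U_\star,
\end{equation}
a negative constant lower bound independent of $t$.

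Next I would integrate the first inequality of Theorem \ref{thm: quantitative derivative bounds, general case}, namely $\log(I)'\geq (2+C)e^{-\int\frac{1-\kappa}{\tau}}\frac{U}{\tau}-3C$. Since $2+C>0$, $\tau>0$, and the exponential weight $e^{-\int_a^t\frac{1-\kappa(s)}{\tau(s)}ds}$ is positive, while $U(t)\leq 0$, I can replace $U(t)$ by the smaller quantity $U_\star$ and $2+C$ by $2+\sup_{[a,b]}C$ in the first term (both substitutions only decrease the right side, since that term is non-positive), and replace $3C$ by $3\sup_{[a,b]}C$ in the subtracted term. Integrating from $a$ to $b$ then yields
\begin{equation}
    \log I(b)-\log I(a)\geq \left(2+\sup_{[a,b]}C\right)U_\star\int_a^b e^{-\int_a^t\frac{1-\kappa(s)}{\tau(s)}ds}\tau^{-1}(t)\,dt-3(b-a)\sup_{[a,b]}C,
\end{equation}
and exponentiating gives precisely (\ref{eqn: log(I) bound}). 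The integral $\int_a^b e^{-\int_a^t\frac{1-\kappa}{\tau}}\tau^{-1}\,dt$ is finite because $\tau$ is bounded below by $t_1-b>0$ on $[a,b]\subset[t_0,t_1)$ and the exponential factor is continuous on the compact interval, so the right-hand exponent is finite.

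The main obstacle is purely bookkeeping: keeping the signs straight when weakening the integrand. The key points are that $U\leq 0$ everywhere (so the first term in $\log(I)'$ is non-positive and can be made more negative by lowering $U$ to $U_\star$ and raising its positive coefficient $2+C$), that $U(a)-\tau(a)<0$ (so enlarging the exponent in $U_\star$ is legitimate), and that all the weight factors $e^{-\int\frac{1-\kappa}{\tau}}$, $\tau^{-1}$, $2+C$ are positive. No new analytic input beyond Theorem \ref{thm: quantitative derivative bounds, general case} is needed; the finiteness of the time integral and the reduction of backwards uniqueness to $I(a)=0$ mirror the argument already given for Corollary \ref{cor: backwards uniqueness}.
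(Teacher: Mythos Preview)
Your proposal is correct and mirrors the paper's proof: integrate $U'\geq C^2(U-\tau(a))$ (the paper uses the equivalent logarithmic form, its third inequality in Theorem \ref{thm: quantitative derivative bounds, general case}) to obtain the uniform lower bound $U_\star$, insert this into the integrated first inequality, and exponentiate, with backwards uniqueness following from finiteness of the exponent. The only blemish is the aside about $\tau(a)'$: since $\tau(a)$ is a constant its derivative is $0$, not $\tau'=-1$, but your ``more simply'' alternative already gives the correct integrating-factor computation.
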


\begin{proof}
Integrating the first claim in Theorem \ref{thm: quantitative derivative bounds, general case} gives
\begin{align}\label{eqn: intermediate log(I) bound}
    \log(I(b))-\log(I(a))
    &\geq \int_a^b(2+C)e^{-\int_a^t \frac{1-\kappa(s)}{\tau(s)}ds}\frac{U(t)}{\tau(t)}dt-3\int_a^bC\,dt \\
    &\geq \left(2+\sup_{[a,b]}C\right)\int_a^be^{-\int_a^t \frac{1-\kappa(s)}{\tau(s)}ds}\frac{U(t)}{\tau(t)}\,dt-3(b-a)\sup_{[a,b]}C. \nonumber
\end{align}
Integrating the third claim in Theorem \ref{thm: quantitative derivative bounds, general case} gives, for any $t\in [a,b]$,
\begin{align}
    \log(\tau(a)-U(t))
    \leq \log(\tau(a)-U(a))+\int_a^tC^2\,ds \label{eqn: tight bound of U} \\
    \leq \log(\tau(a)-U(a))+\int_a^bC^2\,ds.\nonumber
\end{align}
Therefore,
\begin{equation}
    U(t)\geq (U(a)-\tau(a))\exp\left(\int_a^bC^2\,ds\right)+\tau(a). 
\end{equation}
Inserting this lower bound into (\ref{eqn: intermediate log(I) bound}) and exponentiating proves the first part of the Corollary. The second part follows immediately from the first. (Note as in the previous section that the integral $\int_a^b e^{-\int_a^t \frac{1-\kappa(s)}{\tau(s)}ds}\tau^{-1}(t)\,dt$ is finite since $\tau$ is uniformly positive on $[a,b]\subset [t_0,t_1)$.)
\end{proof}

\begin{appendices}
\section{The Gaussian soliton}\label{sec: examples}

Consider a shrinking Ricci soliton flow $(M^n,g(t),f(t))_{t<0}$ with 
\begin{equation}\label{eqn: shrinker equation}
    \Ric + \Hess_f=\frac{1}{2\tau}g,
\end{equation}
where $\tau(t)=-t$ denotes backwards time. Then $f$ evolves by (\ref{eqn: evolution of f}) and consequently the function $(4\pi \tau)^{-\frac{n}{2}}e^{-f}$ satisfies the conjugate heat equation. 
Thus, on a shrinking Ricci soliton, the function $(4\pi \tau)^{-\frac{n}{2}}e^{-f}$ shares many formal properties with the fundamental solution of the conjugate heat equation centered at some point. These properties were used by Colding-Minicozzi \cite{CM3} to define a fruitful notion of parabolic frequency on shrinking Ricci solitons, and inspired the results of this paper.
For a detailed analysis of the fundamental solution of the conjugate heat equation on shrinking Ricci solitons, and in particular its relationship with the soliton potential $f$, see \cite{LW2}.

As a special case, consider the shrinking Gaussian soliton flow $(\R^n,g(t),f(t))_{t<0}$, with
\begin{equation}
    g(t)=g_{\euc}, \qquad\qquad \qquad f(x,t)=\frac{|x|^2}{4\tau},
\end{equation}
where $\tau(t)=-t$ again denotes backwards time. See \cite[\S 4.2]{CLN} for a detailed description of the shrinking Gaussian soliton. 
The corresponding drift Laplacian is given by
\begin{equation}
    \dL_f=\Delta-\frac{1}{2\tau}\langle x,\nabla(\cdot)\rangle,
\end{equation}
By Proposition \ref{prop: Spectrum of shrinking Gaussian drift Laplacian}, its spectrum is given by
\begin{equation}\label{eqn: Spectrum of shrinking Gaussian drift Laplacian with time}
    \Spec\left(\dL_{\frac{|x|^2}{4\tau}}\right)=\left\{-\frac{k}{2\tau}\; \bigg \vert \; k\in \N\right\},
\end{equation}
with the $(-\frac{k}{2\tau})$-eigenfunctions being polynomials of degree $k$.

Below, the equality case of Theorem \ref{thm: monotonicity of frequency on RFs} is analyzed on the shrinking Gaussian soliton flow.
That is, the solutions of the heat equation on $\R^n$ for which the frequency is constant are determined, and the value of their frequency is computed. 
Let $u:\R^n\times (-\infty,0)\to \R$ solve the heat equation along the shrinking Gaussian soliton flow and suppose that $U(t)=U(-1)$ is constant. Since $\kappa$ can be chosen to equal 1 for shrinking soliton flows, the equality statement in Theorem \ref{thm: monotonicity of frequency on RFs} implies that 
\begin{equation}
    \dL_fu=e^{-\int \frac{1-\kappa}{\tau}}\frac{U(t)}{\tau(t)}u=\frac{U(-1)}{\tau(t)}u,
\end{equation}
for all $t<0$. By (\ref{eqn: Spectrum of shrinking Gaussian drift Laplacian with time}), it follows that there exists $k\in \N$ such that,
\begin{equation}
    U(t)\equiv -\frac{k}{2},
\end{equation}
and that $u$ is a polynomial of degree $k$. Hence $u$ is a caloric polynomial of degree $k$.

\subsection{Spectral theory of the Ornstein-Uhlenbeck operator}

The drift Laplacian $\dL_f$ associated with the shrinking Gaussian soliton is also called the Ornstein-Uhlenbeck operator; this appendix computes the spectrum of this operator.

\begin{proposition}
[Spectrum of shrinking Gaussian drift Laplacian]\label{prop: Spectrum of shrinking Gaussian drift Laplacian}
Let $\tau >0$. The spectrum of the shrinking Gaussian drift Laplacian $\dL_{\frac{|x|^2}{4\tau}}$ on $L^2(\R^n,e^{-\frac{|x|^2}{4\tau}}dx)$ is given by
\begin{equation}
    \Spec\left(\dL_{\frac{|x|^2}{4\tau}}\right)=\left\{-\frac{k}{2\tau}\; \bigg \vert \; k\in \N\right\}.
\end{equation}
Moreover, if $u\in L^2(e^{-\frac{|x|^2}{4\tau}}dx)$ is an eigenfunction with $\dL_{\frac{|x|^2}{4\tau}}u=-\frac{k}{2\tau} u$, then $u$ is a polynomial of degree $k$.
\end{proposition}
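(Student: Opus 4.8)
The plan is to reduce the problem to the classical spectral theory of the Hermite operator by a change of variables, then upgrade the soft spectral statement to the explicit polynomial description of eigenfunctions. First I would rescale: setting $y=x/\sqrt{2\tau}$ turns $\dL_{|x|^2/(4\tau)}$ into $\frac{1}{2\tau}\bigl(\Delta_y-\langle y,\nabla_y(\cdot)\rangle\bigr)$, and the weighted space $L^2(\R^n,e^{-|x|^2/(4\tau)}dx)$ into $L^2(\R^n,e^{-|y|^2/2}dy)$ up to a constant. So it suffices to prove that the operator $L:=\Delta-\langle y,\nabla\rangle$ on $L^2(\R^n,e^{-|y|^2/2}dy)$ has spectrum $\{-k : k\in\N\}$ with degree-$k$ polynomial eigenfunctions; the stated proposition then follows by multiplying eigenvalues by $\frac{1}{2\tau}$. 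In one dimension, $L$ is (up to conjugation by the Gaussian weight) the harmonic-oscillator Hamiltonian, whose eigenfunctions are the Hermite polynomials $H_k$ with $LH_k=-kH_k$; in $n$ dimensions one takes products $H_{k_1}(y_1)\cdots H_{k_n}(y_n)$, which are eigenfunctions with eigenvalue $-(k_1+\cdots+k_n)$, and the eigenvalue $-k$ has eigenspace spanned by all such products with $k_1+\cdots+k_n=k$.

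The key steps, in order, are: (i) verify by direct computation that each product of Hermite polynomials of total degree $k$ is an eigenfunction of $L$ with eigenvalue $-k$, so $\{-k:k\in\N\}\subseteq\Spec(L)$; (ii) recall that the Hermite polynomials form a complete orthogonal basis of $L^2(\R^n,e^{-|y|^2/2}dy)$ — this is the standard fact that polynomials are dense in the Gaussian-weighted $L^2$ space, proved e.g. via the density of $C_c^\infty$ and an approximation argument, or via the analyticity of the Gaussian-weighted Fourier/Laplace transform; (iii) conclude that $L$ has no spectrum outside $\{-k:k\in\N\}$, since a self-adjoint operator with a complete orthonormal set of eigenvectors has spectrum equal to the closure of its set of eigenvalues, and here the eigenvalues $-k$ form a discrete set with no finite accumulation point; (iv) establish the eigenfunction claim: if $Lu=-k\,u$ with $u\in L^2$, expand $u$ in the Hermite basis; since the Hermite functions of total degree $m$ lie in distinct eigenspaces for distinct $m$, $u$ must be a (finite) linear combination of Hermite polynomials of total degree exactly $k$, hence a polynomial of degree $k$.

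I would carry out step (i) by the one-variable identity $H_k''-yH_k'=-kH_k$ (equivalently, using the recursion/generating function for Hermite polynomials) and then separating variables; this is routine. Step (iv) is also essentially bookkeeping once completeness is in hand: orthogonality across eigenspaces forces the Hermite expansion of $u$ to be supported on total degree $k$, and a finite combination of polynomials of degree $\le k$ whose top-degree parts do not cancel (they cannot, being the $-k$-eigenspace) is a polynomial of degree exactly $k$.

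The main obstacle is step (ii), the completeness of the Hermite polynomials in $L^2(\R^n,e^{-|y|^2/2}dy)$ — more precisely, the density of polynomials in this space. This is the one genuinely analytic input; everything else is algebraic manipulation or abstract spectral theory. The cleanest route is: if $g\in L^2(e^{-|y|^2/2}dy)$ is orthogonal to all polynomials, then the function $z\mapsto\int_{\R^n} g(y)e^{-|y|^2/2}e^{\langle z,y\rangle}\,dy$ is entire in $z\in\C^n$ (dominated convergence and the Gaussian decay justify differentiating under the integral), and all of its Taylor coefficients at $0$ vanish because $g\perp$ monomials; hence this entire function is identically zero, so the Fourier transform of $g(y)e^{-|y|^2/2}$ vanishes, forcing $g=0$. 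Once this density is established, the self-adjointness of $L$ on its natural domain and the standard spectral theorem for operators with a complete eigenbasis close the argument.
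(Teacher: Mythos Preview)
Your proof is correct, but it takes a different route from the paper's. You reduce to the classical Hermite spectral theory: exhibit the Hermite product basis, prove its completeness in the Gaussian-weighted $L^2$ via the entire-function/Fourier argument, and then invoke the spectral theorem for a self-adjoint operator with a complete eigenbasis to pin down both the spectrum and the polynomial structure of eigenfunctions. The paper instead argues via the commutator identity $[\dL_f,\partial_i]=\tfrac{1}{2\tau}\partial_i$: differentiating an eigenfunction raises its eigenvalue by $\tfrac{1}{2\tau}$, so after enough derivatives the eigenvalue is nonnegative, and integration by parts then forces that derivative to be constant; this simultaneously shows the eigenvalues lie in $\{-k/(2\tau)\}$ and that eigenfunctions are polynomials, with the reverse inclusion supplied by a direct one-dimensional Hermite computation. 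The paper's ladder-operator argument is more elementary in that it entirely sidesteps the completeness statement you correctly flag as the main analytic input; your approach, on the other hand, yields slightly more (an explicit orthogonal eigenbasis and the full spectral decomposition) and is the standard textbook treatment.
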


\begin{proof}
The result is standard, see e.g. \cite[Prop. 3.1]{MPP} for a proof using parabolic methods. For the convenience of the reader, an elementary proof is given here based on the commutator formula 
\begin{equation}
\left[\dL_{\frac{|x|^2}{4\tau}},\partial_i\right]u=\frac{1}{2\tau}\partial_iu,
\end{equation}
which may be verified as follows: setting $f=\frac{|x|^2}{4\tau}$ for the remainder of this proof, it follows that $\nabla f=\frac{x}{2\tau}$ and hence
\begin{align}
    \dL_f\partial_iu
    &=\Delta \partial_iu-\frac{1}{2\tau}\langle x,\nabla \partial_iu\rangle \\
    &= \partial_i\Delta u-\frac{1}{2\tau}\langle x_ie_i,(\partial_j \partial_iu)e_j\rangle \nonumber\\
    &= \partial_i\Delta u-\partial_i\left(\frac{1}{2\tau}\langle x_ie_i,(\partial_ju)e_j\rangle\right) +\frac{1}{2\tau}\langle e_i,(\partial_ju)e_j\rangle \nonumber\\
    &= \partial_i\dL_fu +\frac{1}{2\tau}\partial_iu.\nonumber
\end{align}

In particular, if $u$ is a $\dL_f$-eigenfunction with $\dL_f u=\lambda u$, the commutator formula implies that
\begin{equation}
    \dL_f(\partial_iu)=\left(\frac{1}{2\tau}+\lambda\right)\partial_iu,
\end{equation}
and iteration gives, for any multi-index $\alpha \in \N^n$, that
\begin{equation}
    \dL_fu_{\alpha}=\left(\frac{|\alpha|}{2\tau}+\lambda\right)u_{\alpha},
\end{equation}
where $u_{\alpha}=\frac{\partial^{|\alpha|}u}{\partial x^{\alpha}}$. In other words, $u_{\alpha}$ is an $\dL_f$-eigenfunction with eigenvalue $\frac{|\alpha|}{2\tau}+\lambda$.
Choosing $|\alpha|\geq -2\tau \lambda$, forces the eigenvalue $\frac{|\alpha|}{2\tau}+\lambda$ to be nonnegative. The integration by parts formula (\ref{eqn: self-adjointness of drift Laplacian}) then gives that
\begin{equation}
    0
    \leq \left(\frac{|\alpha|}{2\tau}+\lambda\right)\int_{\R^n}|u_{\alpha}|^2 \,e^{-f}dx
    =\int_{\R^n}(\dL_fu_{\alpha})u_{\alpha}\,e^{-f}dx
    =-\int_{\R^n}|\nabla u_{\alpha}|^2\,e^{-f}dx
    \leq 0.
\end{equation}
Hence $\nabla u_{\alpha}=0$, meaning $u_{\alpha}$ is constant. Moreover, if $u_{\alpha}$ is not identically zero, then $\lambda = -\frac{|\alpha|}{2\tau}$.
The fundamental theorem of calculus then implies that $u$ is necessarily a polynomial of degree $|\alpha|$. This proves the inclusion $\Spec\left(\dL_f\right)\subset \{-\frac{k}{2\tau} \mid  k\in \N\}$.

The opposite inclusion, $\{-\frac{k}{2\tau} \mid  k\in \N\}\subset \Spec\left(\dL_f\right)$, is shown using the Lemma below: by Lemma \ref{lem: 1-dimensional shrinker spectrum}, there exists a Hermite polynomial $u:\R\to \R$ with $\dL_{x_1^2/4\tau}u=-\frac{k}{2\tau}u$. Extending $u$ trivially to a function on $\R^n$ then yields $\dL_fu=-\frac{k}{2\tau}u$.
\end{proof}

\begin{lemma}
[1-dimensional spectrum]\label{lem: 1-dimensional shrinker spectrum}
Let $\tau >0$. The spectrum of the 1-dimensional shrinking Gaussian drift Laplacian $\dL_{\frac{x^2}{4\tau}}$ on $L^2(\R,e^{-\frac{x^2}{4\tau}}dx)$ is given by
\begin{equation}
    \Spec\left(\dL_{\frac{x^2}{4\tau}}\right)=\left\{-\frac{k}{2\tau}\; \bigg \vert \; k\in \N\right\},
\end{equation}
and the eigenfunctions are precisely the Hermite polynomials.
\end{lemma}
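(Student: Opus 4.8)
The plan is to reduce to the classical one-dimensional Ornstein--Uhlenbeck operator by rescaling, to exhibit the Hermite polynomials as an explicit complete family of eigenfunctions, and thereby to pin down the spectrum exactly. For Step~1, substituting $x=\sqrt{2\tau}\,y$ turns the weight $e^{-x^2/4\tau}\,dx$ into a constant multiple of $e^{-y^2/2}\,dy$, and a direct computation gives $\dL_{x^2/4\tau}=\frac{1}{2\tau}L$, where $Lu=u''-yu'$ is the standard Ornstein--Uhlenbeck operator on $L^2(\R,e^{-y^2/2}\,dy)$. Hence it suffices to prove $\Spec(L)=\{-k:k\in\N\}$ with the (probabilists') Hermite polynomials as eigenfunctions; the stated form of the lemma then follows by undoing the substitution.

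For Step~2, set $w=e^{-y^2/2}$ and $H_k=(-1)^k e^{y^2/2}\,w^{(k)}$. Differentiating the identity $w'=-yw$ a total of $k+1$ times by Leibniz's rule yields $w^{(k+1)}=-yw^{(k)}-kw^{(k-1)}$, which translates into the three-term recurrence $H_{k+1}=yH_k-kH_{k-1}$ together with $H_k'=kH_{k-1}$. Combining these two relations gives $H_k''-yH_k'+kH_k=0$, i.e.\ $LH_k=-kH_k$; in particular each $H_k$ is a polynomial of degree $k$, and every $-k$ with $k\in\N$ lies in $\Spec(L)$, which is one of the two inclusions.

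For Step~3, I claim there is no other spectrum and the eigenfunctions are exactly the $H_k$. The commutator computation in the proof of Proposition~\ref{prop: Spectrum of shrinking Gaussian drift Laplacian} is local and purely algebraic (and does not itself invoke the present lemma), so it applies verbatim in dimension one and shows that every eigenfunction of $\dL_{x^2/4\tau}$ is a polynomial and every eigenvalue equals $-k/(2\tau)$ for some $k\in\N$; alternatively this is a quick direct check using $[\dL_{x^2/4\tau},\partial]=\frac{1}{2\tau}\partial$ and the integration-by-parts formula (\ref{eqn: self-adjointness of drift Laplacian}). Matching coefficients in the ODE $2\tau p''-xp'+kp=0$ then forces a nonzero polynomial solution to have degree exactly $k$ and determines all of its coefficients from the leading one, so the $(-k/(2\tau))$-eigenspace is one-dimensional and spanned by a multiple of $H_k$. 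Finally, since all moments of the Gaussian are finite and the Gaussian moment problem is determinate, polynomials are dense in $L^2(\R,e^{-y^2/2}\,dy)$; thus, after normalization, $\{H_k\}_{k\in\N}$ is an orthonormal basis of eigenfunctions of the self-adjoint operator $L$ (realized, e.g., via the Dirichlet form $u\mapsto\int|\nabla u|^2\,d\nu$), so $L$ has pure point spectrum equal to the already-closed set $\{-k:k\in\N\}$, with no continuous or residual part.

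The main obstacle is precisely the completeness of the Hermite polynomials, equivalently the density of polynomials in the Gaussian-weighted $L^2$ space: this is the only genuinely analytic ingredient. It is standard --- it follows, for instance, from the integrability of $e^{\epsilon|y|}$ against the Gaussian, which makes the moment problem determinate --- but it is the step that cannot be skipped, since without it one could not rule out spectrum off the discrete set. Everything else is elementary ODE bookkeeping, and the inclusion $\Spec(\dL_{x^2/4\tau})\subset\{-k/(2\tau):k\in\N\}$ can simply be imported from the commutator argument already used for Proposition~\ref{prop: Spectrum of shrinking Gaussian drift Laplacian}.
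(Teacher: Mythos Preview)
Your proof is correct and follows essentially the same route as the paper's: rescale to a canonical Ornstein--Uhlenbeck operator, verify via the Rodrigues formula that the Hermite polynomials are eigenfunctions with the claimed eigenvalues, and invoke the commutator argument from Proposition~\ref{prop: Spectrum of shrinking Gaussian drift Laplacian} for the reverse inclusion. Your treatment is slightly more thorough in that you also address completeness of the Hermite system to exclude continuous or residual spectrum, a point the paper leaves implicit.
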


\begin{proof}
    Here the case $\tau =1$ is proven; the general case follows by scaling. In 1 dimension, the drift Laplace eigenvalue equation (for $\tau=1$) reduces to the following ODE:
    \[
    u''(x) - \dfrac{x}{2}u'(x) - \lambda u(x) = 0.
    \]
    Setting $v(x) = u(2x)$, the ODE is equivalent to Hermite's differential equation:
    \[
    v''(x) - 2xv'(x) - 4\lambda v(x) = 0.
    \]

    Now it is shown that for $\lambda = -\frac{k}{2}$ where $k$ is a non-negative integer, the $k$-th Hermite polynomial is a solution to the ODE. Let $D^k=\frac{d^k}{dx^k}$ and define $v(x) = e^{x^2}D^ke^{-x^2}$, which is the $k$-th Hermite polynomial up to sign. Then
    \begin{align*}
        v'(x) &= e^{x^2}D^{k+1}e^{-x^2} + 2xe^{x^2}D^ke^{-x^2}\\
        v''(x) &= e^{x^2}D^{k+2}e^{-x^2} + 4xe^{x^2}D^{k+1}e^{-x^2} + (4x^2+2)e^{x^2}D^ke^{-x^2}.
    \end{align*}
    Hence,
    \begin{align*}
        v''(x) - 2xv'(x) + 2kv(x)
        &= e^{x^2}D^{k+2}e^{-x^2} + 2xe^{x^2}D^{k+1}e^{-x^2} + (2k+2)e^{x^2}D^ke^{-x^2}\\
        &= -2e^{x^2}D^{k+1}(xe^{-x^2}) + 2xe^{x^2}D^{k+1}e^{-x^2} + (2k+2)e^{x^2}D^ke^{-x^2} \\
        &= 0,
    \end{align*}
    since $D^{k+1}(xe^{-x^2}) = xD^{k+1}e^{-x^2}+(k+1)D^k e^{-x^2}$. Hence, the $k$-th Hermite polynomial is an eigenfunction when $\lambda = -\frac{k}{2}$.
    To show the Hermite polynomials are the only solutions, the argument from the first part of the previous proof can be applied. 
\end{proof}

\end{appendices}

{\footnotesize

}
\Addresses

\end{document}